\newtheorem{theorem}{Theorem}[section]
\newtheorem{corollary}[theorem]{Corollary}
\theoremstyle{definition}
\newtheorem{definition}[theorem]{Definition}
\newtheorem{example}[theorem]{Example}
\theoremstyle{remark}
\newtheorem{remark}[theorem]{Remark}
\numberwithin{equation}{section}
\numberwithin{equation}{section}
\newsavebox{\savepar}
\begin{document}
\title{\bf On the properties of Ascent and Descent of Composition operator on Orlicz spaces}
\author{Ratan Kumar Giri \footnote{Corresponding author}\,, Shesadev Pradhan \\
 {\it{\small Dept. of Mathematics, National Institute of Technology, Rourkela, Odisha, India }}\\
{\it{\small e-mail: giri90ratan@gmail.com, pradhansh@nitrkl.ac.in}}
 }

\date{}
\maketitle
\begin{abstract}
Here, the composition operators on Orlicz spaces
with finite ascent and descent as well as infinite ascent and
descent are characterized.
\newline
\textbf{Keywords:} Orlicz function, Orlicz Space, Radon-Nikodym
derivative, Composition
operator.\\
\newline
\textbf{2010 AMS Mathematics Subject Classification:} 46E30; 46.35;
47B33.
\end{abstract}

\section{Introduction and preliminaries}
Before going to start, let us recall the notion of ascent and
descent of an operator on an arbitrary vector space $X$. So if $T: X
\rightarrow X$ is an operator on $X$, then $N(T)$ and $R(T)$ denotes
the null space and range space of $T$ respectively, that is
$$N(T)=\{x \in X: T(x)=0\}\,\,\,\,\,\mbox{and}\,\,R(T)= \{T(x): x \in X\}$$
The null space of $T^k$ is a $T$-invariant subspace of $X$, that is,
$T(N(T^k))\subseteq N(T^k)$ for every positive integer $k$.  Indeed,
if $x\in N(T^k)$ then $T^k(x)=0$ and therefore,
$T^k(T(x))=T(T^k(x))=0$, i.e., $T(x)\in N(T^k)$. We also have the
following subspace inclusions:
$$N(T)\subseteq N(T^2) \subseteq N(T^3)\subseteq \cdots$$
The range $R(T^k)=T^k(X)$ of each operator $T^k$ is clearly another
$T$-invariant subspace of $X$. Moreover:
$$R(T)\supseteq R(T^2) \supseteq R(T^3)\supseteq \cdots$$
Following definitions and well known results are relevant to our
context (\cite{func11}, \cite{func}, \cite{func12});
\begin{theorem}
 For an operator $T: X \rightarrow X$ on a vector space we have have following:
 \begin{enumerate}
  \item If $N(T^k)= N(T^{k+1})$ for some $k$, then $N(T^n)= N(T^k)$ for all $n\geq k$.
  \item If $R(T^k)= R(T^{k+1})$ for some $k$, then $R(T^n)= R(T^k)$ for all $n\geq k$.
 \end{enumerate}
\end{theorem}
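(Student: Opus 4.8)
\section*{Proof proposal}

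The plan is to prove both statements by induction on $n$, observing that in each case the real content is a single inductive step: the hypothesis of equality at level $k$ propagates to equality at level $k+1$, and thereafter an identical argument yields equality at every subsequent level. Concretely, for part (1) I would first prove that $N(T^k)=N(T^{k+1})$ forces $N(T^{k+1})=N(T^{k+2})$, and then note that the same reasoning with $k$ replaced by $k+1$ (whose hypothesis is now established) gives $N(T^{k+2})=N(T^{k+3})$, and so on; a routine induction then delivers $N(T^n)=N(T^k)$ for all $n\geq k$.

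For the null-space step, one inclusion is free: the chain $N(T)\subseteq N(T^2)\subseteq\cdots$ recalled in the preliminaries already gives $N(T^{k+1})\subseteq N(T^{k+2})$, so only the reverse inclusion needs work. The key idea is to peel off one factor of $T$ to lower the exponent into the range where the hypothesis applies. Given $x\in N(T^{k+2})$, I would write $T^{k+1}(T(x))=T^{k+2}(x)=0$, so that $T(x)\in N(T^{k+1})=N(T^k)$ by hypothesis; applying $T^k$ then yields $T^{k+1}(x)=T^k(T(x))=0$, i.e. $x\in N(T^{k+1})$. This establishes $N(T^{k+2})\subseteq N(T^{k+1})$ and hence equality, completing the inductive step.

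Part (2) is handled dually. Here the free inclusion comes from the descending chain $R(T)\supseteq R(T^2)\supseteq\cdots$, which gives $R(T^{k+2})\subseteq R(T^{k+1})$, so the task is to prove $R(T^{k+1})\subseteq R(T^{k+2})$. The natural move is to take $y\in R(T^{k+1})$, say $y=T^{k+1}(x)$, and then exploit the hypothesis by observing that $T^k(x)\in R(T^k)=R(T^{k+1})$; this lets me write $T^k(x)=T^{k+1}(z)$ for some $z\in X$. Substituting back, $y=T(T^k(x))=T(T^{k+1}(z))=T^{k+2}(z)\in R(T^{k+2})$, which gives the desired inclusion and hence $R(T^{k+1})=R(T^{k+2})$; induction finishes the proof exactly as before.

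I expect the main (and essentially the only) subtlety to lie in the range-space step: unlike the null-space argument, where one simply slides a factor of $T$ inward, the range case requires introducing an auxiliary element $z$ via the equality $R(T^k)=R(T^{k+1})$ and then recombining the factors of $T$ correctly. Once that substitution is set up properly the computation is immediate, and both parts reduce to the clean one-step induction described above.
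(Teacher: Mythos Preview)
Your argument is correct: the one-step induction via ``peel off a factor of $T$'' for null spaces and ``introduce an auxiliary preimage $z$'' for ranges is exactly the standard proof of this fact, and both inductive steps are carried out properly.

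Note, however, that the paper does not actually supply its own proof of this theorem. It is stated in the preliminaries as a well-known result and attributed to the references (Taylor--Lay, Abramovich--Aliprantis, etc.) without argument, so there is no in-paper proof to compare against. Your write-up is the standard textbook proof one would find in those sources.
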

We now introduce ascent and descent of an operator.
\begin{definition}
Let $T:X\rightarrow X$ be an operator on a vector space.
\begin{enumerate}
 \item The ascent $\alpha(T)$ of $T$ is the smallest natural number $k$ such that $N(T^k)= N(T^{k+1})$. If there is no $k\in \mathbb{N}$ such that
 $N(T^k)= N(T^{k+1})$, then we say that ascent $\alpha(T)$ is infinite.
 \item Similarly, the descent $\delta(T)$ of $T$ is the smallest natural number $k$ such that $R(T^k)= R(T^{k+1})$. If there is no $k\in \mathbb{N}$ such that
 $N(T^k)= N(T^{k+1})$, then we say that descent $\delta(T)$ is infinite.
\end{enumerate}
\end{definition}
It turns out that if ascent and descent of an operator is finite
then they are equal. This useful result stated next:
\begin{theorem}
 If an operator $T: X \rightarrow X$ on a vector space has finite ascent and descent, then they must be coincide i.e., $\alpha(T)= \delta(T)=p <\infty$,
 and $$X= N(T^p)\oplus R(T^p)$$
 Moreover, if $x$ is a Banach space and $T$ is linear, then $R(T^p)$ is a closed subspace.
\end{theorem}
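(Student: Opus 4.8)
The plan is to set $a=\alpha(T)$, $d=\delta(T)$ and $p=\max(a,d)$, and to build the decomposition from two observations extracted from the stabilization theorem above. First I would record that $N(T^k)\cap R(T^a)=\{0\}$ for every $k$: if $y=T^a x$ lies in $N(T^k)$, then $T^{a+k}x=0$, so $x\in N(T^{a+k})=N(T^a)$ by ascent–stabilization, whence $y=T^a x=0$. In particular $N(T^p)\cap R(T^p)=\{0\}$, since $R(T^p)\subseteq R(T^a)$. Second, using descent–stabilization in the form $R(T^p)=R(T^{2p})$, for any $x\in X$ I can write $T^p x=T^{2p}z$, so that $x=(x-T^p z)+T^p z$ with $x-T^p z\in N(T^p)$ and $T^p z\in R(T^p)$; hence $X=N(T^p)+R(T^p)$. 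Combining the two gives the direct sum $X=N(T^p)\oplus R(T^p)$.

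Next I would analyse how $T$ acts on the two summands, both of which are $T$-invariant. On $R(T^p)$ the operator $T$ is surjective, because $T(R(T^p))=R(T^{p+1})=R(T^p)$, and injective, by the first observation, hence a bijection; on $N(T^p)$ it is nilpotent, with $(T|_{N(T^p)})^p=0$, say of nilpotency index $m\le p$. Writing $T_0=T|_{N(T^p)}$ and using the direct sum together with the injectivity of $T$ on $R(T^p)$, I obtain $N(T^k)=N(T_0^k)$ and $R(T^k)=R(T_0^k)\oplus R(T^p)$ for every $k$. Reading the ascent off the first identity, the chain $N(T_0^k)$ strictly increases up to $k=m$ and is constant thereafter, so $\alpha(T)=m$; reading the descent off the second, $R(T_0^k)$ stabilizes exactly when it reaches $\{0\}$, i.e.\ at $k=m$, so $\delta(T)=m$. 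Thus $\alpha(T)=\delta(T)=m$, and since $p=\max(\alpha(T),\delta(T))$ we conclude $\alpha(T)=\delta(T)=p$, as required.

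For the Banach-space statement I would first note that $N(T^p)$ is closed, being the kernel of the bounded operator $T^p$. The natural strategy is to show that the algebraic projection $P$ onto $R(T^p)$ along $N(T^p)$ is bounded, for then $R(T^p)=N(I-P)$ is closed. Concretely, since $T^p$ restricts to a bijection of $R(T^p)$, one has the formula $P=\big((T|_{R(T^p)})^{-1}\big)^{p}\,T^p$, so the whole question reduces to the boundedness of $(T|_{R(T^p)})^{-1}$, that is, to showing that the algebraic decomposition $X=N(T^p)\oplus R(T^p)$ is in fact topological. This is the main obstacle: it is exactly here that completeness of $X$ is needed, and I expect to invoke the Open Mapping Theorem, applied to the bounded bijection induced by $T^p$ from the quotient $X/N(T^p)$ onto $R(T^p)$ and using that $N(T^p)$ is closed, to upgrade the set-theoretic inverse to a bounded one. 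The algebraic part above is routine; this closed-range step is the only place where the Banach structure genuinely enters and is the crux of the proof.
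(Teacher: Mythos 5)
The paper itself offers no proof of this theorem: it is quoted as a known result with references to Taylor--Lay and Abramovich--Aliprantis, so there is nothing to compare your argument against line by line. Judged on its own, the algebraic part of your proposal is correct and complete: the two observations ($N(T^k)\cap R(T^{\alpha(T)})=\{0\}$ via ascent stabilization, and $X=N(T^p)+R(T^p)$ via $R(T^p)=R(T^{2p})$) give the direct sum, and your analysis of $T$ restricted to the two invariant summands (nilpotent of index $m$ on $N(T^p)$, bijective on $R(T^p)$, with $N(T^k)=N(T_0^k)$ and $R(T^k)=R(T_0^k)\oplus R(T^p)$) correctly forces $\alpha(T)=\delta(T)=m=p$. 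This is the standard route.

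The gap is in the Banach-space step, and it is a genuine circularity rather than a missing detail. You propose to apply the Open Mapping Theorem to the bounded bijection induced by $T^p$ from $X/N(T^p)$ \emph{onto $R(T^p)$}. The Open Mapping Theorem requires the codomain to be complete, and $R(T^p)$ in the subspace norm is complete precisely when it is closed --- which is the very statement you are trying to prove. The same circularity infects the projection formula $P=\bigl((T|_{R(T^p)})^{-1}\bigr)^p T^p$: boundedness of $(T|_{R(T^p)})^{-1}$ is not available until $R(T^p)$ is known to be a Banach space. The standard repair is to apply the Open Mapping Theorem to the induced operator $\widetilde{T^p}\colon X/N(T^p)\to X/N(T^p)$, $x+N(T^p)\mapsto T^px+N(T^p)$. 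This map is a bounded bijection \emph{between Banach spaces}: surjectivity follows from $X=N(T^p)+R(T^p)$, and injectivity from $N(T^{2p})=N(T^p)$. Hence its inverse is bounded. Then, given $T^px_n\to y$, one gets $x_n+N(T^p)\to x+N(T^p)$ in the quotient for some $x$, so suitable representatives $x_n-m_n\to x$ with $m_n\in N(T^p)$, and continuity of $T^p$ gives $y=T^px\in R(T^p)$. With that substitution your outline becomes a complete proof; as written, the final step would not go through.
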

Let $T: X \rightarrow X$ be a bounded operator on the Banach space
$X$. A pair $(V, W)$ of closed subspaces of $X$ is said to be a
reducing pair of the operator $T$ if $X= V\oplus W$. Now we have the
following result:
\begin{theorem}
 A bounded operator $T:X \rightarrow X$ on a Banach space $X$ has finite ascent and descent if and only if $T$ has a reducing pair of closed subspaces $(V, W)$
 such that the operator $T: V \rightarrow V$ is nilpotent and $T: W \rightarrow W $ is invertible.\\
 Moreover, if $p=\alpha(T)= \delta(T) <\infty$,  then the pair $(V, W)$ where $V= N(T^p)$ and $W= R(T^p)$ is the only reducing pair.
\end{theorem}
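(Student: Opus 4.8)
The plan is to establish the biconditional by proving each implication separately and then deriving uniqueness, using the preceding theorem (which gives $\alpha(T)=\delta(T)=p$ and the closed decomposition $X=N(T^p)\oplus R(T^p)$) as the workhorse. For the forward implication I would suppose $T$ has finite ascent and descent, set $p=\alpha(T)=\delta(T)$, and take $V=N(T^p)$, $W=R(T^p)$. These are closed (the kernel by continuity of $T^p$, the range by the previous theorem) and satisfy $X=V\oplus W$, so $(V,W)$ is a reducing pair; since each $N(T^k)$ and $R(T^k)$ is $T$-invariant (as noted in the preliminaries), $T$ genuinely restricts to operators on $V$ and $W$. On $V=N(T^p)$ we have $T^p=0$, so $T|_V$ is nilpotent. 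On $W=R(T^p)$ surjectivity of $T|_W$ comes from $T(R(T^p))=R(T^{p+1})=R(T^p)$ (descent $=p$), while injectivity comes from the ascent: if $w=T^p x\in W$ and $Tw=0$, then $T^{p+1}x=0$, so $x\in N(T^{p+1})=N(T^p)$ and hence $w=T^p x=0$. Thus $T|_W$ is a continuous bijection of the Banach space $W$, which the bounded inverse theorem upgrades to an invertible operator.

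For the converse I would assume a reducing pair $(V,W)$ of $T$-invariant closed subspaces with $T|_V$ nilpotent, say $T^q|_V=0$, and $T|_W$ invertible. Splitting $x=v+w$ with $v\in V$, $w\in W$, invariance yields $T^n x=T^n v+T^n w$ with the summands again in $V$ and $W$; since $V\cap W=\{0\}$, for every $n\ge q$ one gets $T^n v=0$ automatically and $T^n w=0$ only when $w=0$ (because $T^n|_W$ is invertible), so $N(T^n)=V$, and likewise $T^n(V)=\{0\}$ while $T^n(W)=W$, so $R(T^n)=W$. In particular $N(T^q)=N(T^{q+1})$ and $R(T^q)=R(T^{q+1})$, which forces both the ascent and the descent to be finite.

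For uniqueness I would reuse this computation: the previous theorem already produces the pair $(N(T^p),R(T^p))$, and if $(V',W')$ is any reducing pair with $T|_{V'}$ nilpotent and $T|_{W'}$ invertible, the converse argument identifies $V'=N(T^n)$ and $W'=R(T^n)$ for all sufficiently large $n$; taking $n\ge p$ and using that both chains stabilize at $p$ gives $V'=N(T^p)$ and $W'=R(T^p)$, so the reducing pair is unique.

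I expect the main obstacle to be the forward direction's proof that $T|_W$ is invertible: assembling injectivity purely from the ascent equality $N(T^{p+1})=N(T^p)$ and surjectivity purely from the descent equality $R(T^{p+1})=R(T^p)$, and then invoking the open mapping (bounded inverse) theorem — which is where the Banach-space structure and the closedness of $W$ are essential — to pass from an algebraic bijection to a topological isomorphism. By comparison, the null-space and range bookkeeping in the converse is routine once $T^n$ is split along the direct sum.
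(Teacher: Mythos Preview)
Your argument is correct and is the standard proof of this classical result. However, there is nothing to compare against: the paper does not prove this theorem. It is stated in the preliminaries section as a known fact, with references to the literature (Taylor--Lay, Kumar, and Abramovich--Aliprantis), and no proof is supplied. So your write-up would in fact add a proof where the paper has none.

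One small remark on the paper's terminology: note that the paper defines a ``reducing pair'' merely as a pair of closed subspaces $(V,W)$ with $X=V\oplus W$, without explicitly demanding $T$-invariance; invariance is only implicit in the requirement that $T$ map $V$ to $V$ and $W$ to $W$. You handle this correctly by checking invariance of $N(T^p)$ and $R(T^p)$ in the forward direction and by using invariance in the converse, but it is worth being aware that the paper's formal definition is slightly looser than the usual one.
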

\par F. Riesz in \cite{func13} introduced the concept of ascent and descent for a linear operator in a connection
with his investigation of compact linear operators. Also the study
of ascent and descent has been done as a part of spectral properties
of an operator (\cite{func14}, \cite{func15}). Since the composition
operator provide the diverse and illuminating example of operators
which leads to study useful insight into structure theory of
operators, it is desirable to study ascent and descent of these
operators. In this paper, we study the ascent $\alpha(T)$ and
descent $\delta(T)$, where $T$ is a composition operator on Orlicz
Spaces.
\section{Composition operator on Orlicz spaces}
Let $(\Omega,\Sigma,\mu)$ be a $\sigma$-finite complete measure
space, where $\Sigma$ is a $\sigma$-algebra of subsets of an
arbitrary set $\Omega$ and $\mu$ is a non-negative measure on
$\Sigma$. Let $\phi :[0,\infty)\rightarrow[0,\infty)$ be
non-decreasing continuous convex function such that
 $\phi(x)=0$ if and only if $x=0$ with
 $\displaystyle {\lim_{x\rightarrow 0}\phi(x)/x=0}$ and
$\displaystyle {\lim_{x\rightarrow \infty}\phi(x)/x=\infty}.$
Such a function $\phi$ is known as an Orlicz function. Let
$L^{0}(\Omega)$ be denote the linear space of all equivalence
classes of $\Sigma$-measurable functions on $\Omega$, where we
identify any two functions are equal in the sense of $\mu$-almost
everywhere on $\Omega$. Then the functional $I_\phi :L^{0}(\Omega)
\rightarrow [0,\infty]$, defined by
$$I_\phi(f)=\int_\Omega\Phi(f(t))d\mu(t)$$ where $f\in
L^{0}(\Omega)$, is a pseudomodular \cite{Ames97}, which is also
defined as a modular in \cite{Ames98}. Let $L^\phi (\Omega)$ be the
set of all measurable function such that $\int_{\Omega}\phi(\alpha
|f|)d\mu < \infty$ for some $\alpha>0$. The space $L^{\phi}(\Omega)$
is called as Orlicz space and it is a Banach space with two norms:
the Luxemberg norm \cite{Ames99}, defined as
$$ ||f||_\phi=\inf \left \{k>0:I_\phi\left(\frac{|f|}{k}\right)\leq 1\right\}$$ and the
Orlicz norm in the Amemiya form  \cite{Ames82,Ames94} is given as
$$||f||^{0}_\phi = \displaystyle {\inf _{k>0}(1+I_\phi(kf))/k}.$$
 Note that the
equality of the Orlicz norm and the Amemiya norm was proved in
\cite{Ames82}. If $\phi(x)= x^p$, $1<p<\infty$, then
$L^{\phi}(\Omega)= L^p$, the well known Banach space of
$p$-integrable function on $\Omega$ with $||f||_\phi=
\left(\frac{1}{p}\right)^{\frac{1}{p}} ||f||_p$ (\cite{Ames99}). It
is well known that $||f||_\phi \leq ||f||^{0}_\phi\leq 2||f||_\phi$
and $||f||_\phi \leq 1$ if and only if $I_\phi(f)\leq 1$
(\cite{Ames77,Ames80}). Moreover, if $A\in \Sigma$ and
$0<\mu(A)<\infty$, then
$||\chi_A||_\phi=\frac{1}{\phi^{-1}(\frac{1}{\mu(A)})},$ where
$\chi_A$ is the characteristic function on $A$ (page no 78,
\cite{Ames94}). For more literature concerning Orlicz spaces, we
refer to Kufener, John and Fucik \cite{Ames79},  Musielak
\cite{Ames72} and Rao \cite{Ames94}.
\par Let $\tau:\Omega\rightarrow\Omega$ be a measurable transformation,
that is, $\tau^{-1}(A)\in\Sigma$ for any $A\in\Sigma$. If
$\mu(\tau^{-1}(A))=0$ for any $A\in\Sigma$ with $\mu(A)=0$, then
$\tau$ is called as nonsingular. This condition implies that the
measure $\mu\circ\tau^{-1}$, defined by $\mu\circ\tau^{-1}(A)
:=\mu(\tau^{-1}(A))$ for $A\in\Sigma$, is
 absolutely continuous w.r.t $\mu$ ($\mu\circ\tau^{-1}\ll\mu$). Then
the Radon-Nikodym theorem implies that there exist a non-negative
locally integrable function $f_{\tau}(x)$ on $\Omega$ such that
$$\mu\circ\tau^{-1}(A)=\int_{A}f_{\tau}(x)d\mu(t)\,\,\,\quad\mbox{for}
\,\, A\in\Sigma.$$ Any nonsingular measurable transformation $\tau$
induces a linear operator (Composition operator)  $C_{\tau}$ from
$L^{0}(\Omega)$ into itself which is defined as
$$C_{\tau}f(t)= f(\tau(t)),\quad t\in\Omega,\quad f\in L^{0}(\Omega).$$
Here, the non-singularity of $\tau$ guarantees that the operator
$C_{\tau}$ is well defined. Now, if the linear operator $C_\tau$
maps an Orlicz space $L^\phi(\Omega)$ into itself, then we call
$C_\tau$ is a composition operator in $L^\phi(\Omega)$.
\par An Orlicz function $\phi$ is said to be satisfied the $\Delta_2$ condition if there exists a positive constant $K$ such
that $\phi(2x)\leq K \phi(x)$ for all $x> 0$ (\cite{Ames80}).
In \cite{blum74} the necessary and sufficient condition about the boundedness and compactness of
composition operators on Orlicz spaces are described. Regarding the
boundedness of the composition operator $C_\tau$ from an Orlicz
space $L^\phi(\Omega)$ into itself, we have the following theorem
(Theorem 2.2, \cite{blum74}).
\begin{theorem}
The composition operator $C_\tau$ is bounded from an Orlicz space
$L^\phi(\Omega)$ into itself if $\mu(\tau^{-1}(A))\leq K\mu(A)$
holds for some $K>0$ and for all $A\in \Sigma$ with $\mu(A)<\infty$
and also converse holds when the Orlicz function $\phi $ satisfies
$\Delta_2$ condition for all $x>0$.
\end{theorem}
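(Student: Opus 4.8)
The plan is to reduce everything to the Radon--Nikodym derivative $f_\tau$ together with the change of variables formula $\int_\Omega g(\tau(t))\,d\mu(t)=\int_\Omega g(x) f_\tau(x)\,d\mu(x)$, valid for every nonnegative measurable $g$. Applying this with $g=\phi(|f|)$ gives the basic identity $I_\phi(C_\tau f)=\int_\Omega \phi(|f(x)|) f_\tau(x)\,d\mu(x)$, which is the engine for both directions. I would also record the elementary fact that the hypothesis $\mu(\tau^{-1}(A))\le K\mu(A)$ for all $A$ with $\mu(A)<\infty$ is equivalent to $f_\tau\le K$ $\mu$-a.e.: writing $\mu(\tau^{-1}(A))=\int_A f_\tau\,d\mu$ and testing on the sets $\{f_\tau>K+\varepsilon\}$ intersected with an exhausting sequence of finite-measure sets (using $\sigma$-finiteness) forces those sets to be null.

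For the sufficiency direction I would assume $f_\tau\le K$ a.e. and may take $K\ge 1$ without loss. The identity above immediately yields $I_\phi(C_\tau f)\le K\,I_\phi(f)$. The point is to pass from this modular estimate to a norm estimate, and here convexity of $\phi$ with $\phi(0)=0$ does the work: since $1/K\le 1$, one has $\phi(x/K)\le \phi(x)/K$, so for any $f$ with $\|f\|_\phi\le 1$ (equivalently $I_\phi(f)\le 1$) I get $I_\phi(C_\tau f/K)\le \tfrac1K I_\phi(C_\tau f)\le I_\phi(f)\le 1$, whence $\|C_\tau f\|_\phi\le K$. Homogeneity then gives $\|C_\tau f\|_\phi\le K\|f\|_\phi$ for all $f$, so $C_\tau$ is bounded. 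Note that this half uses no growth condition on $\phi$.

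For the converse I would test boundedness on characteristic functions. For $A$ with $0<\mu(A)<\infty$ we have $C_\tau\chi_A=\chi_{\tau^{-1}(A)}$, and since this image lies in $L^\phi(\Omega)$ its underlying set must have finite measure. Plugging the norm formula $\|\chi_E\|_\phi=1/\phi^{-1}(1/\mu(E))$ into $\|C_\tau\chi_A\|_\phi\le M\|\chi_A\|_\phi$ (with $M=\|C_\tau\|$) yields $\phi^{-1}(1/\mu(A))\le M\,\phi^{-1}(1/\mu(\tau^{-1}(A)))$. Setting $u=\phi^{-1}(1/\mu(\tau^{-1}(A)))$ and applying the increasing function $\phi$ converts this into $\mu(\tau^{-1}(A))/\mu(A)\le \phi(Mu)/\phi(u)$. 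This last quotient is exactly what the $\Delta_2$ condition controls: iterating $\phi(2x)\le K_0\phi(x)$ gives $\phi(2^n x)\le K_0^{\,n}\phi(x)$, so choosing $n$ with $2^n\ge M$ bounds $\phi(Mu)/\phi(u)\le K_0^{\,n}=:K$ uniformly in $u>0$. Hence $\mu(\tau^{-1}(A))\le K\mu(A)$.

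The main obstacle is this final conversion in the converse: the norm inequality only delivers control of $\phi^{-1}$ evaluated at the reciprocal measures, and the quotient $\phi(Mu)/\phi(u)$ can blow up as $u\to\infty$ (i.e.\ as $\mu(A)\to0$) for a general Orlicz function. It is precisely to tame this behaviour for all $x>0$ that the $\Delta_2$ condition is indispensable, and this explains why the converse, unlike the sufficiency, cannot dispense with it.
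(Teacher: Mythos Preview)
The paper does not prove this statement; it is quoted without proof as Theorem~2.2 of Cui--Hudzik--Kumar--Maligranda, so there is no in-paper argument to compare against. Your proposal is a correct and self-contained proof: the sufficiency via the change-of-variables identity $I_\phi(C_\tau f)=\int_\Omega\phi(|f|)\,f_\tau\,d\mu$ together with the equivalence between $\mu(\tau^{-1}(A))\le K\mu(A)$ and $f_\tau\le K$ a.e., and the converse via the explicit norm $\|\chi_E\|_\phi=1/\phi^{-1}(1/\mu(E))$ combined with iterating the $\Delta_2$ inequality to bound $\phi(Mu)/\phi(u)$ uniformly, is precisely the standard route. Your closing observation that $\Delta_2$ is indispensable exactly because the quotient $\phi(Mu)/\phi(u)$ can otherwise blow up as $u\to\infty$ is accurate.
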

Through out this paper, we assume that the composition operator is
continuous. In \cite{inte11}, kernel of the Composition operator
$C_\tau$ is obtained. It is shown that $N(C_\tau)=
L^\phi(\Omega_\circ)$, where $\Omega_\circ =\{x\in \Omega:
f_{\tau}(x) =0\}$ and $L^\phi(\Omega_\circ)=\{f\in L^\phi(\Omega):
f(x)=0~~ \mbox{for}~~ x\in \Omega \setminus \Omega_\circ\}$. For
systematic study on composition operators on different spaces we
refer to \cite{Ames102}, \cite{Ames81} and \cite{Ames101}.
\par Now, we will characterized the composition operator
on Orlicz spaces $L^\phi(\Omega)$ with finite ascent and descent as
well as infinite ascent and descent.
\section{Main Results}
Let us consider $ \tau: \Omega \rightarrow \Omega$ be a nonsingular
measurable transformation. Now $\tau$ is a non singular measurable
transformation implies that $\tau^k$ is also non singular measurable
transformation for every $k\geq2$ with respect to the measure $\mu$.
Hence $\tau^k$ is also induces a composition operator $C_{\tau^k}$.
Note that for every measurable function $f$, $C^k_\tau(f)= f\circ
\tau^k = C_{\tau^k}(f)$. Also we have
$$\cdot \cdot\cdot \ll\mu\circ \tau^{-(k+1)}\ll\mu\circ \tau^{-k}\ll \cdot \cdot \cdot \ll\mu\circ \tau^{-1}\ll\mu .$$
Take $\mu \circ \tau^{-k}= \mu_k$. Then by Radon-Nikodym theorem,
there exists a non-negative locally integrable function $f_{\tau^k}$
on $\Omega$ so that the measure $\mu_k$ can be represented as $$
\mu_k(A) = \int_A f_{\tau^{k}}(x) d\mu(x),
\,\,\,\mbox{for~~all~~$A\in\Sigma$.}$$ The function $f_{\tau^k}$ is
known as the Radon-Nikodym derivative of the measure $\mu_k$ with
respect to the measure $\mu$.  Now the following theorem
characterized the composition operators with ascent $k$:
\begin{theorem}
The composition operator on Orlicz space $L^\phi(\Omega)$ has ascent
$k\geq 1$ if and only if $k$ is the first positive integer such that
the measures $\mu_k$ and $\mu_{k+1}$ are equivalent.
\end{theorem}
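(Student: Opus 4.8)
The plan is to reduce the ascent condition $N(C_\tau^k)=N(C_\tau^{k+1})$ to a statement about the zero sets of the Radon--Nikodym derivatives $f_{\tau^k}$, and then to recognise that statement as the equivalence of the measures $\mu_k$ and $\mu_{k+1}$. The starting point is the identity $C_\tau^k=C_{\tau^k}$ recorded at the opening of Section~3, which lets me apply the kernel description of \cite{inte11} not to $\tau$ but to the iterate $\tau^k$. This yields $N(C_\tau^k)=N(C_{\tau^k})=L^\phi(\Omega_{0,k})$, where $\Omega_{0,k}=\{x\in\Omega: f_{\tau^k}(x)=0\}$ is the zero set of the density of $\mu_k$ with respect to $\mu$; for $k=1$ this recovers the stated $N(C_\tau)=L^\phi(\Omega_\circ)$.

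Next I would record that $L^\phi(\Omega_{0,k})=L^\phi(\Omega_{0,k+1})$ holds if and only if $\Omega_{0,k}=\Omega_{0,k+1}$ up to a $\mu$-null set. Each of these spaces consists precisely of the $f\in L^\phi(\Omega)$ that vanish off the corresponding set, and the inclusion chain $N(C_\tau^k)\subseteq N(C_\tau^{k+1})$ already forces $\Omega_{0,k}\subseteq\Omega_{0,k+1}$ modulo null sets; hence the two subspaces coincide exactly when $\mu(\Omega_{0,k+1}\setminus\Omega_{0,k})=0$. To make this precise I would use characteristic functions together with the formula $\|\chi_A\|_\phi=1/\phi^{-1}(1/\mu(A))$ from Section~2: a piece of $\Omega_{0,k+1}\setminus\Omega_{0,k}$ of positive (finite, by $\sigma$-finiteness) measure supplies a nonzero element of $N(C_\tau^{k+1})\setminus N(C_\tau^k)$, and conversely its absence forces equality of the subspaces.

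The core of the argument is the translation of $\Omega_{0,k}=\Omega_{0,k+1}$ (mod $\mu$) into the equivalence of $\mu_k$ and $\mu_{k+1}$. Writing $S_j=\{f_{\tau^j}>0\}$ for the measure-theoretic support of $\mu_j$, I would observe that a set is $\mu_j$-null precisely when its intersection with $S_j$ is $\mu$-null, so two $\mu$-absolutely-continuous measures are equivalent if and only if $S_k=S_{k+1}$ modulo $\mu$. Since the tower $\cdots\ll\mu_{k+1}\ll\mu_k\ll\cdots$ displayed in Section~3 gives $\mu_{k+1}\ll\mu_k$ for free, we always have $S_{k+1}\subseteq S_k$; the equivalence $\mu_k\sim\mu_{k+1}$ is therefore equivalent to the single reverse inclusion $S_k\subseteq S_{k+1}$, that is, to $\Omega_{0,k}=\Omega_{0,k+1}$ modulo $\mu$.

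Combining these steps shows that, for each fixed $k$, the equality $N(C_\tau^k)=N(C_\tau^{k+1})$ is equivalent to the equivalence of $\mu_k$ and $\mu_{k+1}$; taking the smallest such $k$ and invoking the definition of ascent yields the claimed characterisation. The main obstacle I anticipate is the bookkeeping of $\mu$-null sets across the three passages---subspace equality, equality of zero sets, and equivalence of measures---in particular exploiting the one automatically available direction of absolute continuity so that only a single inclusion of supports remains to be characterised.
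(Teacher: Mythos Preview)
Your proposal is correct and follows essentially the same three-step reduction as the paper: identify $N(C_\tau^k)=L^\phi(\Omega_{0,k})$ via \cite{inte11}, pass from equality of these subspaces to equality (mod $\mu$) of the zero sets $\Omega_{0,k}$, and then translate that into equivalence of $\mu_k$ and $\mu_{k+1}$. The only cosmetic difference is in the last step: the paper derives $\Omega_k=\Omega_{k+1}$ from $\mu_k\sim\mu_{k+1}$ using the chain rule for Radon--Nikodym derivatives and handles the converse by a case split on $E\cap\Omega_k$, whereas you package both directions at once via the support characterisation ``$\mu_j(E)=0$ iff $\mu(E\cap S_j)=0$''; your version is a little cleaner but the content is the same.
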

\begin{proof}
Suppose that $\mu_k$ and $\mu_{k+1}$ are equivalent. Then $\mu_{k+1}
\ll\mu_k\ll\mu_{k+1}$. Since $\mu_k\ll\mu_{k+1}\ll\mu$, hence the
chain rule of Radon-Nikodym derivative implies that
\begin{align}
\frac{d\mu_k}{d\mu}(x) & = \frac{d \mu_k}{d \mu_{k+1}}(x) \cdot
\frac{d\mu_{k+1}}{d\mu}(x)\\
\Rightarrow f_{\tau^k}(x) & =\frac{d \mu_k}{d \mu_{k+1}}(x)\cdot
f_{\tau^{k+1}}(x) \label{eq:1}
\end{align}
Similarly, $\mu_{k+1}\ll\mu_k\ll\mu$ implies that
\begin{eqnarray}
f_{\tau^{k+1}}(x)= \frac{d\mu_{k+1}}{d\mu_{k}}(x)\cdot
f_{\tau{k}}(x)\label{eq:2}
\end{eqnarray}
Now kernel of the $C^k_\tau$ given by $N(C^k_\tau) = N(C_{\tau^k})
=L^\phi(\Omega_k)$ where $\Omega_k = \{x\in \Omega:
f_{\tau^k}(x)=0\}$ and $L^\phi(\Omega_k)= \{f\in
\L^\phi(\Omega):f(x)=0~~ \mbox{for}~~ x\in \Omega \setminus
\Omega_k\}$. Similarly,  $N(C^{k+1}_\tau) = L^\phi(\Omega_{k+1})$,
where $\Omega_{k+1} = \{x\in \Omega: f_{\tau^{k+1}}(x)=0\}$. Now
from $\ref{eq:1}$ and $\ref{eq:2}$, it follows that $\Omega_k=
\Omega_{k+1}$. Therefore we have,
$$ N(C^k_\tau)= L^\phi(\Omega_k)= L^\phi(\Omega_{k+1})=
N(C^{k+1}_\tau).$$ Since $k$ is the least hence ,This shows that ascent of $C_\tau$ is $k$.\\
Conversely, suppose that ascent of $C_\tau$ is $k$. Now this implies
that if $N(C^k_\tau)= L^\phi(\Omega_k)$ and
$N(C^{k+1}_\tau)=L^\phi(\Omega_{k+1})$, then
$L^\phi(\Omega_k)=L^\phi(\Omega_{k+1})$. Hence $\Omega_k=
\Omega_{k+1}$ almost everywhere with respect to the measure $\mu$.
So $\Omega_k = \{x\in \Omega: f_{\tau^k}(x)=0\}=\{x\in \Omega:
f_{\tau{k+1}}(x)=0\}$. Now it is known that $\mu_{k+1}\ll\mu_{k}$.
Only to show $\mu_k \ll \mu_{k+1}$. For this let $E\in \Sigma $ such
that $\mu_{k+1}(E)=0$. Now we have the following cases:\\
Case-$1$: when  $E\cap \Omega_k =\emptyset$\\
Then $0= \mu_{k+1}(E)= \int_E f_{\tau^{k+1}}(x)d\mu(x)$ implies that
$\mu(E)=0$ as on $E$, $f_{\tau^{k+1}}(x)>0$. As $\mu_k(E)= \int_E
f_{\tau^k}(x) d\mu(x) $ and $\mu(E)=0$, hence $\mu_k(E)=0$.\\
Case-$2$: when $E\cap \Omega_k\neq \emptyset$\\
Then we have,
\begin{align*}
0 =\mu_{k+1}(E) &  = \int_E
f_{\tau^{k+1}}(x)d\mu(x)\\
& = \int_{E\setminus (E\cap \Omega_k)}f_{\tau^{k+1}}(x)d\mu(x) +
\int_{E\cap \Omega_k}f_{\tau^{k+1}}(x)d\mu(x)\\
& = \int_{E\setminus (E\cap \Omega_k)}f_{\tau^{k+1}}(x)d\mu(x)
\end{align*}
Now this implies that $\mu(E\setminus (E\cap \Omega_k))=0$.
Therefore, in either case $\mu_{k+1}(E)=0$ implies that
$\mu_k(E)=0$. Thus $\mu_{k+1}\ll\mu_k\ll\mu_{k+1}$.
\end{proof}
\begin{corollary}
Ascent of the composition operator $C_\tau$ on Orlicz spaces is
infinite if and only if there does not exist any positive integer
$k$ such that the measures $\mu_k$ and $\mu_{k+1}$ are equivalent.
\end{corollary}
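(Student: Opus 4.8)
The plan is to obtain the corollary as the logical negation of Theorem 3.1, so that no genuinely new computation is needed; the real work is to phrase that theorem — together with the equivalence buried in its proof — in a form that negates cleanly. The key fact I would isolate from the proof of Theorem 3.1 is the per-level biconditional: for each fixed positive integer $k$,
$$N(C_\tau^k) = N(C_\tau^{k+1}) \iff \mu_k \text{ and } \mu_{k+1} \text{ are equivalent},$$
since that proof shows $N(C_\tau^k)=L^\phi(\Omega_k)$ and $N(C_\tau^{k+1})=L^\phi(\Omega_{k+1})$, and establishes that $\Omega_k=\Omega_{k+1}$ ($\mu$-a.e.) is exactly equivalent to $\mu_{k+1}\ll\mu_k\ll\mu_{k+1}$.

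First I would show that the ascent $\alpha(C_\tau)$ is finite if and only if there exists some positive integer $k$ for which $\mu_k$ and $\mu_{k+1}$ are equivalent. By definition $\alpha(C_\tau)$ is finite precisely when there is some $k$ with $N(C_\tau^k)=N(C_\tau^{k+1})$, and by the per-level biconditional above this occurs precisely when some $k$ gives $\mu_k$ equivalent to $\mu_{k+1}$. Theorem 1.1 guarantees that once such a $k$ occurs the null spaces stabilize, so the least such $k$ really is the ascent, which is consistent with Theorem 3.1.

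Negating this statement then yields the corollary directly: $\alpha(C_\tau)$ is infinite if and only if no positive integer $k$ satisfies $\mu_k$ equivalent to $\mu_{k+1}$. Concretely, for one direction I would argue contrapositively — if some $k$ had $\mu_k$ equivalent to $\mu_{k+1}$, the biconditional forces $N(C_\tau^k)=N(C_\tau^{k+1})$ and hence a finite ascent; for the other direction, if no such $k$ exists then $N(C_\tau^k)\subsetneq N(C_\tau^{k+1})$ strictly for every $k$, so no stabilization ever occurs and the ascent is infinite by definition.

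I do not expect a substantive obstacle here, since the entire content already sits inside Theorem 3.1. The one point demanding care is that Theorem 3.1 is stated in terms of the \emph{first} integer witnessing equivalence, whereas the corollary requires only the \emph{existence} statement for a single level. I would therefore quote the per-level equivalence extracted from the proof rather than the ``first integer'' phrasing of the statement, and invoke Theorem 1.1 to justify that finiteness of the ascent is equivalent to the mere existence of one equivalent pair $(\mu_k,\mu_{k+1})$.
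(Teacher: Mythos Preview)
Your proposal is correct and matches the paper's approach: the paper states this corollary immediately after Theorem 3.1 with no separate proof, treating it as the direct logical negation, which is exactly what you do. Your explicit extraction of the per-level biconditional $N(C_\tau^k)=N(C_\tau^{k+1})\iff \mu_k\sim\mu_{k+1}$ from the proof of Theorem 3.1 is a careful way to justify the negation, but no additional idea beyond that is needed or expected.
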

\par We say that a measurable transformation $\tau$ is measure preserving if $\mu(\tau^{-1}(E)) =
\mu(E)$ for all $E\in \Sigma$. Then we have the following results:
\begin{corollary}
\begin{enumerate}
\item  If the measure $\mu$ is measure preserving then the ascent of
the composition operator $C_\tau$ on Orlicz spaces $L^\phi(\Omega)$
is 1.
\item If $\tau$ is a nonsingular surjective measurable transformation such that
$\mu(\tau^{-1}(E))\geq \mu(E)$ for all $E\in \Sigma$, then also
ascent of the composition operator induced by $\tau$ on Orlicz
spaces is 1.
\end{enumerate}
\end{corollary}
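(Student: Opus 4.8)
The plan is to read off both statements from Theorem 3.1, which says that the ascent of $C_\tau$ equals the first integer $k$ for which $\mu_k$ and $\mu_{k+1}$ are equivalent. Since $k=1$ is already the smallest admissible value, in each part it suffices to prove that $\mu_1$ and $\mu_2$ are equivalent; the ascent is then forced to be exactly $1$. Equivalently, using the description $N(C_\tau^k)=L^\phi(\Omega_k)$ with $\Omega_k=\{x:f_{\tau^k}(x)=0\}$ recorded in the proof of Theorem 3.1, it is enough to check that $\Omega_1$ and $\Omega_2$ coincide $\mu$-almost everywhere. The common mechanism behind both parts is that the hypotheses force the Radon--Nikodym densities $f_{\tau}$ and $f_{\tau^2}$ to be bounded below by $1$ almost everywhere, which makes each $\mu_k$ mutually equivalent to $\mu$, and in particular $\mu_1\sim\mu_2$.

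For part (1), if $\tau$ is measure preserving then $\mu(\tau^{-1}(A))=\mu(A)$ for every $A\in\Sigma$, so $\mu_1=\mu$. Applying this identity to $A=\tau^{-1}(E)$ gives $\mu_2(E)=\mu(\tau^{-1}(\tau^{-1}(E)))=\mu(\tau^{-1}(E))=\mu(E)$, and by an immediate induction $\mu_k=\mu$ for every $k$. Hence $\mu_1=\mu_2$ as measures (their common density with respect to $\mu$ is $1$), so they are trivially equivalent and Theorem 3.1 yields ascent $1$.

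For part (2), I would first translate the hypothesis $\mu(\tau^{-1}(E))\ge\mu(E)$ into a pointwise bound on $f_\tau$. Writing it as $\int_E f_\tau\,d\mu=\mu_1(E)\ge\mu(E)=\int_E 1\,d\mu$ for all $E\in\Sigma$ and testing on $E=\{f_\tau<1\}$ forces that set to be $\mu$-null, so $f_\tau\ge 1$ a.e. Iterating the hypothesis with $A=\tau^{-1}(E)$ gives $\mu_2(E)=\mu(\tau^{-1}(\tau^{-1}(E)))\ge\mu(\tau^{-1}(E))\ge\mu(E)$, whence $f_{\tau^2}\ge 1$ a.e. by the same reasoning. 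Since $f_\tau$ and $f_{\tau^2}$ are strictly positive a.e., each of $\mu_1,\mu_2$ is equivalent to $\mu$: absolute continuity $\mu_k\ll\mu$ is automatic from the integral representation, while $\mu_k(E)=0$ together with $f_{\tau^k}\ge 1$ forces $\mu(E)=0$ for the reverse direction. Thus $\mu_1\sim\mu\sim\mu_2$, and the ascent is $1$ by Theorem 3.1.

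The only non-formal steps are the deduction $f_\tau\ge 1$ a.e. from the integral inequality — a routine test on the ``bad set'' $\{f_\tau<1\}$ — and its iteration to $f_{\tau^2}$; this is where I expect to take care. Surjectivity of $\tau$ in part (2) is not actually needed for the density estimate, which rests solely on the inequality $\mu(\tau^{-1}(E))\ge\mu(E)$; I would retain it only to match the stated hypotheses and to keep the setting non-degenerate, rather than as an essential ingredient of the argument.
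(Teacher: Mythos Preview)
The paper states this corollary without proof, treating it as an immediate consequence of Theorem~3.1; your approach---showing that the hypotheses force $f_{\tau^k}\ge 1$ a.e.\ (hence $\mu_k\sim\mu$ for every $k$, and in particular $\mu_1\sim\mu_2$), then invoking Theorem~3.1 at $k=1$---is correct and is exactly the intended deduction. Your side remark that surjectivity is not used in part~(2) is also accurate; the density estimate rests solely on the inequality $\mu(\tau^{-1}(E))\ge\mu(E)$.
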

Note that for a non-singular surjective measurable transformation
$\tau: \Omega \rightarrow \Omega$, measure of set a $A\subseteq
\Omega$ positive does not always imply that the measure of
$\tau^{-1}(A)$ is also positive. Example of one such measurable
transformation is following:
\begin{example}
Consider unit interval $[0, 1]$ with Lebesgue measure. Let $C$ be
the Cantor set. Map $C$ onto $[0, \frac{1}{2}]$. For example convert
ternary expansion to binary expansion and half it. Let the map be
$S_1$. Next, map $[0,1]\setminus C$ onto $[\frac{1}{2}, 1]$ in a
non-singular way. In fact we can get a one-one bimeasurable map
$S_2$ from $[0,1]\setminus C$ onto $[\frac{1}{2},1]$ such that
measure of $S_2(A)$ equals half measure of $A$, for each $A$ in
$[0,1]\setminus C$. Let $\tau$ be the map which is $S_1$ on $C$ and
$S_2$ on $[0,1]\setminus C$.\\
This will satisfy our requirement. Here $[0,\frac{1}{2}]$ has
positive measure but its inverse has measure zero. However if $B$ is
a set of zero measure then measure of $\tau^{-1}(B)$ is same as
measure of $S^{-1}_2 (B)$ intersected with [1/2,1].
\end{example}

Now the next result gives a necessary and sufficient condition for
infinite ascent of the composition operator $C_\tau$ on Orlicz
spaces $L^\phi(\Omega)$ in terms of range of $\tau$.
\begin{theorem}
 Suppose that in the measure space $\Omega= (\Omega, \Sigma, \mu)$,  $\tau: \Omega \rightarrow \Omega$ is a nonsingular surjective measurable transformation such that if $\mu(A)>0$ then also $\mu(\tau^{-1}(A))>0$, where $A\in \Sigma$. Then ascent of $C_\tau$ on Orlicz space $\L^\phi(\Omega)$ is infinite if and only if there exists a
 sequence of subsets $\{\Omega_k\}$ of $\Omega$ such that for all $k\geq 1$
 \begin{enumerate}
  \item $0<\mu(\Omega_k) < \infty$
  \item $\Omega_k \nsubseteq R(\tau^k)$ but $\Omega_k \subseteq R(\tau^{k-1})$
  \item $\mu(\Omega_k \cap R(\tau^k)) = 0$ and $\mu(\Omega_k\cap \Omega_{k+1})=0$.
 \end{enumerate}
\end{theorem}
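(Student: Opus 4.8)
The plan is to reduce the statement to the measure-theoretic characterization already in hand, namely that the ascent of $C_\tau$ is infinite precisely when no two consecutive measures $\mu_k$, $\mu_{k+1}$ are equivalent (Corollary 3.2), and then to translate that analytic condition into the geometric conditions (1)--(3) by building a dictionary between the range $R(\tau^k)$ and the support of the Radon--Nikodym derivative $f_{\tau^k}$. Throughout I would write $Z_k=\{x\in\Omega: f_{\tau^k}(x)=0\}$, so that by the kernel description established in the proof of Theorem 3.1 one has $N(C_\tau^k)=L^\phi(Z_k)$ together with the nested chain $Z_1\subseteq Z_2\subseteq\cdots$. The whole argument then rests on the single key identity
\[
\Omega\setminus R(\tau^k)=Z_k \qquad\text{up to a }\mu\text{-null set.}
\]

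To prove this identity I would treat the two inclusions separately. For $\Omega\setminus R(\tau^k)\subseteq Z_k$: if $A\subseteq\Omega\setminus R(\tau^k)$ is measurable then $\tau^{-k}(A)=\emptyset$, so $\int_A f_{\tau^k}\,d\mu=\mu_k(A)=\mu(\tau^{-k}(A))=0$, forcing $f_{\tau^k}=0$ a.e.\ on $A$; here only nonsingularity of $\tau$ is used. For the reverse inclusion $Z_k\subseteq\Omega\setminus R(\tau^k)$ (a.e.), I would invoke the standing hypothesis that $\mu(A)>0$ implies $\mu(\tau^{-1}(A))>0$: were there a set $A\subseteq R(\tau^k)$ with $\mu(A)>0$ on which $f_{\tau^k}=0$ a.e., then $\mu(\tau^{-k}(A))=\mu_k(A)=0$, contradicting the positivity hypothesis applied $k$ times. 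Hence $f_{\tau^k}>0$ a.e.\ on $R(\tau^k)$, which is the desired inclusion. I expect this correspondence to be the \emph{main obstacle}, since it is exactly here that surjectivity and the positivity condition must be used, and one has to cope with the possible non-measurability of $R(\tau^k)$ and control $f_{\tau^k}$ a.e.\ on the range rather than at isolated points.

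For the forward implication, assume $\alpha(C_\tau)$ is infinite. By Corollary 3.2 no $\mu_k$ and $\mu_{k+1}$ are equivalent; combined with $\mu_{k+1}\ll\mu_k$ this gives $\mu_k\not\ll\mu_{k+1}$ for every $k$, which is equivalent to the chain $Z_1\subseteq Z_2\subseteq\cdots$ growing strictly in measure, i.e.\ $\mu(Z_{k+1}\setminus Z_k)>0$ for all $k$. Through the dictionary this says $\mu\big(R(\tau^k)\setminus R(\tau^{k+1})\big)>0$. Using $\sigma$-finiteness of $\mu$ I would then carve out of each such increment a subset $\Omega_{k+1}\subseteq R(\tau^k)\setminus R(\tau^{k+1})$ with $0<\mu(\Omega_{k+1})<\infty$, so that condition (1) holds and condition (2) holds by construction, while $\mu(\Omega_{k+1}\cap R(\tau^{k+1}))=0$ is automatic. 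The mutual disjointness $\mu(\Omega_k\cap\Omega_{k+1})=0$ in (3) follows because the annuli $R(\tau^{k-1})\setminus R(\tau^k)$ are pairwise disjoint by the nestedness of the ranges.

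For the converse, suppose a sequence $\{\Omega_k\}$ satisfying (1)--(3) exists. The conditions $\Omega_k\subseteq R(\tau^{k-1})$ and $\mu(\Omega_k\cap R(\tau^k))=0$ place $\Omega_k$, up to a null set, inside $R(\tau^{k-1})\setminus R(\tau^k)=Z_k\setminus Z_{k-1}$, and (1) makes this increment have positive measure for every $k$. Hence $\mu(Z_k\setminus Z_{k-1})>0$ for all $k$, so $N(C_\tau^{k-1})=L^\phi(Z_{k-1})\subsetneq L^\phi(Z_k)=N(C_\tau^k)$; no positive integer equalizes two consecutive kernels, and Corollary 3.2 (equivalently, the definition of ascent) yields that $\alpha(C_\tau)$ is infinite. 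Beyond the range--derivative dictionary already flagged, the remaining points requiring care are the bookkeeping with $\mu$-null sets, the index shift between the annuli and the sets $\Omega_k$, and making sure the first increment $Z_1=\Omega\setminus R(\tau)$ is non-null (that is, that $C_\tau$ is not essentially injective); these I expect to be routine once the identity $\Omega\setminus R(\tau^k)=Z_k$ is in place.
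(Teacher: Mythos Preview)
Your route differs from the paper's. The paper argues both directions by manipulating explicit functions: in the forward direction it fixes, for each $k$, a witness $f\in N(C_\tau^{k})\setminus N(C_\tau^{k-1})$ and manufactures $\Omega_k$ from the support of $f$ (respectively of $f\circ\tau^{k-1}$), verifying conditions~(2)--(3) by contradiction via the positivity hypothesis; for the converse it simply checks that $\chi_{\Omega_k}\in N(C_\tau^{k})\setminus N(C_\tau^{k-1})$. You instead pass through Corollary~3.2 and the identity $Z_k=\Omega\setminus R(\tau^k)$ (a.e.), reducing everything to the annuli $R(\tau^{k-1})\setminus R(\tau^k)$ and then carving the $\Omega_k$ out of those annuli by $\sigma$-finiteness. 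Your version is more conceptual and makes the role of each hypothesis transparent; the paper's is hands-on and never isolates the identity $Z_k=\Omega\setminus R(\tau^k)$ explicitly, though it is implicitly what drives the contradictions there.

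One caution your dictionary brings into the open, however. The positivity hypothesis $\mu(A)>0\Rightarrow\mu(\tau^{-1}(A))>0$ is exactly the statement $\mu\ll\mu_1$, and iterating gives $\mu\ll\mu_k$ for every $k$; since $\mu_k(Z_k)=\int_{Z_k}f_{\tau^k}\,d\mu=0$, this forces $\mu(Z_k)=0$ for all $k$. (Equivalently, surjectivity already gives $R(\tau^k)=\Omega$, so your identity makes $Z_k$ null directly.) Thus under the stated hypotheses the ascent of $C_\tau$ is always $1$, and both sides of the biconditional are vacuously false. Your argument, like the paper's, therefore goes through vacuously; the only inaccurate sentence in your proposal is the closing remark that showing $Z_1=\Omega\setminus R(\tau)$ non-null should be ``routine''---under the standing hypotheses it is not routine but impossible, and this is worth flagging rather than deferring.
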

\begin{proof}
 Assume that ascent of $C_\tau$ is infinite. Then $N(C^{k}_\tau) \neq N(C^{k+1}_\tau)$ for every $k\geq 0$. For $k=0$, this implies that there exists $f\neq 0$ a.e. in
 $\L^\phi(\Omega)$ such that $f\in N(C_\tau)$ i.e., $f\circ\tau=0$ a.e. . Take $\Omega_\circ =\{x\in\Omega: f(x)=0\}$. As $f\neq 0$, the set
 $\Omega \setminus \Omega_\circ= \Omega_1$ (say) is not empty. Since the measure $\mu$ is $\sigma$-finite, hence it has a subset of finite (positive) measure.
 Without loss of generality, we may assume that $0<\mu(\Omega_1)<\infty$.\\
 Claim: $\Omega_1\nsubseteq R(\tau)$\\
If $\Omega_1 \subseteq R(\tau)$, then $\tau(W_1)=\Omega_1$, where
$W_1= \{x\in\Omega : \tau(x)\in \Omega_1\}$. Then for all $x\in
W_1$, $f\circ \tau(x) \neq0$ and by the given conation of $\tau$,
$\mu(W_1)= \mu(\tau^{-1}(\Omega_1))>0$ as $0<\mu(\Omega_1)<\infty$. This shows that $f\circ \tau \neq 0$ a.e. and $f\notin N(C_\tau)$, which is a contradiction.\\
Claim: $\mu(\Omega_1 \cap R(\tau))=0$\\
If $\Omega_1\cap R(\tau)=\emptyset$, then its obvious. If not, then
$\Omega_1\cap R(\tau)= \Omega '_1$ (say) is a proper subset of both
$\Omega_1$ and $R(\tau)$ respectively. Then $\tau(W'_1)= \Omega'_1$,
where $W'_1 = \{x \in \Omega: \tau(x) \in \Omega'_1 \subset
\Omega_1\}$. Now if $\mu(\Omega'_1)>0$, then by the given condition
of $\tau$, implies that $ \mu(W'_1)=\mu(\tau^{-1}(\Omega'_1))>0$.
But for all $x\in W'_1$, $f\circ \tau (x) \neq 0$. This shows that
$f\notin N(C_\tau)$, which is a
contradiction. Therefore $\mu(\Omega'_1)=0$.\\
Now for $k=1$, we have $N(C_\tau)\neq N(C^2_\tau)$. This implies
that there exist $f\neq 0$ a.e. in $L^\phi(\Omega)$ such that
$f\circ \tau^2=0$ a.e. but $f\circ \tau\neq 0$ a.e. . So if
$\Omega'_2=\{x\in \Omega: f\circ \tau(x)\neq 0\}$, then by the
$\sigma$-finiteness of measure $\mu$, we can assume that
$0<\mu(\Omega '_2)<\infty$. Take $\Omega_2=\tau(\Omega '_2)$. Then
$\Omega_2 \in R(\tau)$. As $\tau$ satisfies the given condition as
above and $C_\tau$ is bounded, hence $0<\mu(\Omega_2)<\infty$. Now
$\tau$ is nonsingular measurable transformation imply that $\tau^2$
also. Then by the similar kind of arguments it can be seen that
$\Omega_2
\nsubseteq R(\tau^2)$ and $\mu(\Omega_2\cap R(\tau^2))=0$.\\
Therefore, we have two subsets $\Omega_1$ and $\Omega_2$ of finite
measure with the following properties:
\begin{itemize}
 \item$\Omega_1 \subseteq R(\tau^0) = R(I)$ ( $I$ denote the identity map ) but $\Omega_1\nsubseteq
 R(\tau)$ and $\mu(\Omega_1\cap R(\tau))=0$
 \item  $\Omega_2 \subseteq R(\tau)$ but $\Omega_2\nsubseteq
 R(\tau^2)$ and $\mu(\Omega_2\cap R(\tau^2))=0$
\end{itemize}
Now $\Omega_1 \cap \Omega_2$ is a subset of $\Omega_1\cap R(\tau)$.
As $\mu(\Omega_1\cap R(\tau))=0$, hence $\mu(\Omega_1 \cap
\Omega_2)=0$. Hence by continuing similar process for every $k\geq
2$, we get a sequence $\{\Omega_k\}$ of subsets of finite measure
such that $\Omega_k \nsubseteq R(\tau^k)$ but $\Omega_k \subseteq
R(\tau^{k-1})$ and  $\mu(\Omega_k \cap R(\tau^k)) = 0$ and
$\mu(\Omega_k\cap \Omega_{k+1})=0$.\\
Conversely, suppose that the given conditions are holds. As
$0<\mu(\Omega_k)<\infty $, hence the characteristic function
$\displaystyle{\chi_{\Omega_k}}$ is in Orlicz space
$L^{\phi}(\Omega)$. As $\Omega_k \subseteq R(\tau^{k-1})$ hence
$\tau^{k-1}(W_k)= \Omega_k$, where $W_k=\{x\in \Omega:
\tau^{k-1}(x)\in \Omega_k\}$. Then we have,
\begin{align*}
C^{k-1}_\tau \chi_{\Omega_k}(W_k) & = \chi_{\Omega_k}(\tau^{k-1}(W_k)) \\
& =\chi_{\Omega_k}(\Omega_k) \\
& =1
\end{align*}
This implies that $\chi_{\Omega_k}\notin N(C^{k-1}_\tau)$. But
\begin{align*}
C^k_\tau\chi_{\Omega_k}(\Omega) & =\chi_{\Omega_k}(\tau^k(\Omega))\\
 & =0\,\,\,\,\mbox{a.e.}
\end{align*}
as $\Omega_k \nsubseteq R(\tau^k)= \tau^k(\Omega)$ and
$\mu(\Omega_k\cap R(\tau^k))=0$. Therefore, we have
$\chi_{\Omega_k}\in N(C^k_{\tau})$ but $\chi_{\Omega_k}\notin
N(C^{k-1}_\tau)$. Since $k$ is arbitrary, hence the ascent of
$C_\tau$ is infinite.
\end{proof}

\begin{remark}
Here if we take our Orlicz function $\phi$ to be $x^p$ with $\Omega
= \mathbb{N}$ and the measure $\mu$ is the counting measure, then
the Orlicz space $L^\phi(\Omega)$ becomes the well known $l^p$
sequence spaces. Now suppose that $\tau: \mathbb{N}\rightarrow
\mathbb{N}$ is onto. Then the measure $\mu$ satisfy previous
criteria for $\tau$. Hence by the previous theorem we can say that
\it{``ascent of the composition operator $C_\tau$ on $l^p$ sequence
space is infinite if and only if there exists a sequence of
disjoints positive integers $\{n_k\}$ such that $n_k \notin
R(\tau^k)$ but $n_k \in R(\tau^{k-1})$ for each $k\geq 1$"}.
\end{remark}
\par Now the following results are characterized the composition
operators $C_\tau$ on Orlicz spaces $L^\phi(\Omega)$ with finite
descent:
\begin{theorem}
If the map the map $\tau: R(\tau^N)\rightarrow R(\tau^N)$ is one-one
for some $N$, then descent of composition operator is less than or
equal to $N$.
\end{theorem}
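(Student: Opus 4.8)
The plan is to establish the single range equality $R(C_\tau^N)=R(C_\tau^{N+1})$; once this is in hand, the range-stabilization result of Section~1 (namely, that $R(T^k)=R(T^{k+1})$ forces $R(T^n)=R(T^k)$ for all $n\ge k$) gives $R(C_\tau^n)=R(C_\tau^N)$ for every $n\ge N$, so that $\delta(C_\tau)\le N$. Since the ranges of the iterates are nested, $R(C_\tau^{N+1})\subseteq R(C_\tau^N)$ holds automatically, and the entire content of the theorem is the reverse inclusion $R(C_\tau^N)\subseteq R(C_\tau^{N+1})$.

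To prove that inclusion I would exploit the hypothesis as follows. Because $\tau$ maps $R(\tau^N)$ into itself with $\tau\big(R(\tau^N)\big)=R(\tau^{N+1})$, the assumed injectivity makes $\tau\colon R(\tau^N)\to R(\tau^{N+1})$ a bijection; let $\sigma\colon R(\tau^{N+1})\to R(\tau^N)$ denote its inverse, so that $\sigma\circ\tau=\mathrm{id}$ on $R(\tau^N)$. Given $g\in R(C_\tau^N)$, write $g=C_\tau^N f=f\circ\tau^N$ with $f\in L^\phi(\Omega)$, and set
\[
h(z)=\begin{cases} f(\sigma(z)), & z\in R(\tau^{N+1}),\\[2pt] 0, & z\in\Omega\setminus R(\tau^{N+1}).\end{cases}
\]
For any $t\in\Omega$ one has $\tau^{N}(t)\in R(\tau^N)$ and $\tau^{N+1}(t)=\tau(\tau^N(t))\in R(\tau^{N+1})$, whence $h(\tau^{N+1}(t))=f\big(\sigma(\tau(\tau^N(t)))\big)=f(\tau^N(t))=g(t)$. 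Thus $C_\tau^{N+1}h=g$, which exhibits $g$ as an element of $R(C_\tau^{N+1})$ \emph{provided} $h$ is an admissible function, i.e.\ $h\in L^\phi(\Omega)$.

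The genuine work, and the step I expect to be the main obstacle, is exactly this admissibility of $h$. First one must know that $\sigma$ is measurable, i.e.\ that $\tau$ restricted to $R(\tau^N)$ is bimeasurable; this should follow from nonsingularity together with standard facts about injective measurable maps (of Lusin--Souslin type), but it needs to be stated with care. Second, and more seriously, one must verify the Orlicz-integrability $I_\phi(\alpha h)<\infty$ for some $\alpha>0$. Substituting $z=\tau(y)$ with $y=\sigma(z)$ converts $\int_{R(\tau^{N+1})}\phi(\alpha|f\circ\sigma|)\,d\mu$ into an integral of $\phi(\alpha|f|)$ against the push-forward of $\mu|_{R(\tau^{N+1})}$ under $\sigma$, and this must be compared with the modular $I_\phi(g)=\int_\Omega \phi(|f|)\,f_{\tau^N}\,d\mu$, which we already control. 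That comparison is a Radon--Nikodym estimate relating the forward transport by $\tau$ to the densities $f_{\tau^N}$ and $f_{\tau^{N+1}}$; the nonsingularity hypotheses together with the standing continuity (boundedness) of $C_\tau$ are what I would use to close this gap, and making the change-of-variables estimate precise is the crux of the argument. Once $h\in L^\phi(\Omega)$ is secured, the reverse inclusion, and hence $\delta(C_\tau)\le N$, follow as above.
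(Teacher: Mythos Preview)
Your approach is exactly the paper's: given $f=C_\tau^N g$, set $h(\tau(y))=g(y)$ for $y\in R(\tau^N)$ (and $h=0$ off $R(\tau^{N+1})$), and verify $C_\tau^{N+1}h=f$; the only difference is that you name the partial inverse $\sigma$ explicitly while the paper leaves it implicit.

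The step you isolate as ``the crux''---membership $h\in L^\phi(\Omega)$, together with the underlying measurability of $\sigma$---is precisely the step the paper dispatches in a single unjustified clause (``$g\in L^\phi(\Omega)$ implies that $h$ also in $L^\phi(\Omega)$''), with no measurability discussion and no modular or change-of-variables estimate. So the paper does not resolve the obstacle you flag; it simply asserts the conclusion. Your caution is warranted: nothing in the standing hypotheses (nonsingularity of $\tau$, boundedness of $C_\tau$) controls the \emph{forward} push of $\mu$ under $\tau$, which is what governs $\int\phi(\alpha|g\circ\sigma|)\,d\mu$, and in a general $\sigma$-finite space this integral need not be finite even when $g\in L^\phi(\Omega)$.
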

\begin{proof}
Suppose that the map $\tau: R(\tau^N) \rightarrow R(\tau^N)$ is
one-to-one. Let $f\in R(C^N_\tau)$. Then $f=C^N_\tau(g)$ for some
$g\in L^\phi(\Omega)$. Now define the function
$$ h(x) = \begin{cases} g(y)~~;\,\,\,\,\mbox{if~~ $y\in R(\tau^N)$~~ and~~ $\tau(y)=x$ }\\
0~~;\,\,\,\,\,\,\,\,\,\,\,\mbox{otherwise}
\end{cases}$$
As $\tau :  R(\tau^N) \rightarrow R(\tau^N)$ is one-to-one, hence
the map $h$ is well defined and $g \in L^\phi(\Omega)$ implies that
$ h$ also in $L^\phi(\Omega)$. Note that $h\circ \tau (x)= g(x)$ for
all $x\in R(\tau^N)$. Now we have,
\begin{align*}
h(\tau^{n+1}(x)) & = h(\tau(\tau^N(x))\\
& = g(\tau^N(x))\\
& = f(x)
\end{align*}
This shows that $ f\in R(C^{N+1}_\tau)$. Therefore,
$R(C^N_\tau)\subseteq R(C^{N+1}_\tau)$. Hence this implies that
descent of the composition operator is less than or equal to $N$
\end{proof}
\begin{corollary}
 If descent of composition operator is infinite then the map $\tau:
R(\tau^k)\rightarrow R(\tau^k)$ is not one-one for all $k\geq 0$.
\end{corollary}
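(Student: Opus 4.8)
The plan is to read this corollary as the contrapositive of the theorem immediately preceding it, so that no new construction is required; the entire content is logical and the substantive work has already been done in proving that theorem. I would begin by arguing by contradiction: assume the descent $\delta(C_\tau)$ is infinite, and suppose toward a contradiction that the conclusion fails, i.e. that there exists some $k\geq 0$ for which the map $\tau: R(\tau^k)\rightarrow R(\tau^k)$ is one-to-one.

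The key step is then a single appeal to the preceding theorem with $N=k$. Its hypothesis is precisely that $\tau: R(\tau^k)\rightarrow R(\tau^k)$ is one-one, so its conclusion yields $\delta(C_\tau)\leq k$. In particular the descent is finite, which flatly contradicts the standing assumption that it is infinite. Since assuming the existence of such a $k$ produces a contradiction, no such $k$ can exist, and therefore $\tau: R(\tau^k)\rightarrow R(\tau^k)$ is not one-one for every $k\geq 0$, which is exactly the assertion of the corollary.

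The only point that genuinely demands care — and the nearest thing to an obstacle in an otherwise immediate contraposition — is the indexing at the endpoint $k=0$. Here $R(\tau^0)=R(I)=\Omega$, so the map under discussion degenerates to $\tau$ acting on all of $\Omega$, and one must confirm that the preceding theorem, stated for ``some $N$'', is meant to include $N=0$; with that reading the equality $R(C^0_\tau)=R(C^1_\tau)$ coming out of the theorem's inclusion argument still makes sense and delivers $\delta(C_\tau)\leq 0$. Once this convention is fixed, the contrapositive covers the full range $k\geq 0$ uniformly and the proof is complete in two or three lines.
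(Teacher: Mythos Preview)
Your proposal is correct and is precisely the intended argument: the corollary is simply the contrapositive of the preceding theorem, which is why the paper states it without proof. Your remark about the $k=0$ endpoint is a reasonable caution, but the theorem's proof goes through verbatim with $N=0$ (reading $R(\tau^0)=\Omega$), so the contraposition covers that case as well.
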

\begin{theorem}
Assume that in the measure space $\Omega= (\Omega, \Sigma, \mu)$ every singleton set has positive measure. Then
descent of composition operator is infinite if the map $\tau:
R(\tau^k)\rightarrow R(\tau^k)$ is not one-one for all $k\geq 0$.
\end{theorem}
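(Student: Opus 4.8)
The plan is to establish the converse directly, by exhibiting for each $k\ge 0$ an explicit element of $R(C^k_\tau)$ that fails to lie in $R(C^{k+1}_\tau)$. Since $R(C^{k+1}_\tau)\subseteq R(C^k_\tau)$ always holds (if $F=g\circ\tau^{k+1}$ then $F=(g\circ\tau)\circ\tau^k\in R(C^k_\tau)$), producing such an element establishes the strict inclusion $R(C^k_\tau)\supsetneq R(C^{k+1}_\tau)$ for every $k$. By the definition of descent there is then no $k$ with $R(C^k_\tau)=R(C^{k+1}_\tau)$, so the descent of $C_\tau$ is infinite.

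Fix $k\ge 0$. Since $\tau:R(\tau^k)\rightarrow R(\tau^k)$ is not one-one, I choose $y_1\ne y_2$ in $R(\tau^k)$ with $\tau(y_1)=\tau(y_2)$, and pick $u_1,u_2\in\Omega$ with $\tau^k(u_1)=y_1$ and $\tau^k(u_2)=y_2$. Because the space is $\sigma$-finite every singleton has finite measure, and by hypothesis $\mu(\{y_1\})>0$; hence $\chi_{\{y_1\}}$ is a nonzero element of $L^\phi(\Omega)$. I take as test function
$$ F := C^k_\tau \chi_{\{y_1\}} = \chi_{\{y_1\}}\circ\tau^k = \chi_{(\tau^k)^{-1}(\{y_1\})}. $$
Since $C^k_\tau=C_{\tau^k}$ maps $L^\phi(\Omega)$ into itself, $F\in R(C^k_\tau)\subseteq L^\phi(\Omega)$; moreover $F\ne 0$, because its support $(\tau^k)^{-1}(\{y_1\})$ contains the positive-measure point $u_1$.

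The key step is to show $F\notin R(C^{k+1}_\tau)$. Suppose instead $F=g\circ\tau^{k+1}$ $\mu$-a.e.\ for some $g\in L^\phi(\Omega)$, and let $Z$ be the null set off which this identity holds pointwise. The decisive observation is that $\tau^{k+1}(u_1)=\tau(\tau^k(u_1))=\tau(y_1)=\tau(y_2)=\tau^{k+1}(u_2)$, so $u_1$ and $u_2$ lie in a common fibre of $\tau^{k+1}$ and therefore $g(\tau^{k+1}(u_1))=g(\tau^{k+1}(u_2))$. On the other hand $F(u_1)=1$ (as $\tau^k(u_1)=y_1$) while $F(u_2)=0$ (as $\tau^k(u_2)=y_2\ne y_1$). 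These facts become incompatible once the a.e.\ identity can be read at the individual points $u_1,u_2$, and this is exactly where the hypothesis that singletons have positive measure is indispensable: the null set $Z$ cannot contain $u_1$ or $u_2$, since $\mu(\{u_i\})>0$, so $u_1,u_2\notin Z$ and the pointwise values are legitimate. The resulting contradiction $1=F(u_1)=g(\tau^{k+1}(u_1))=g(\tau^{k+1}(u_2))=F(u_2)=0$ shows $F\notin R(C^{k+1}_\tau)$, and since $k$ was arbitrary the descent is infinite.

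I expect the main obstacle to be precisely this last passage from almost-everywhere equality to evaluation at the two chosen points: without an assumption of this type (here, positivity of singletons) the merged fibre $\{u_1,u_2\}$ could be absorbed into the ambiguity null set and the argument would collapse, which is why the hypothesis appears. Everything else — the standard inclusion $R(C^{k+1}_\tau)\subseteq R(C^k_\tau)$, the membership $F\in L^\phi(\Omega)$ via continuity of $C_\tau$, and the arithmetic with $\tau^{k+1}$ — is routine.
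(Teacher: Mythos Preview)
Your proof is correct and follows essentially the same route as the paper's: both pick $y_1\neq y_2$ in $R(\tau^k)$ with $\tau(y_1)=\tau(y_2)$, produce a test function in $R(C^k_\tau)$ by applying $C^k_\tau$ to something supported on $\{y_1,y_2\}$, and derive a contradiction from the fact that any $g\circ\tau^{k+1}$ must take equal values on the two fibres $(\tau^k)^{-1}(\{y_i\})$. The only cosmetic difference is that the paper uses the test function $C^k_\tau(\chi_{\{y_1\}}-\chi_{\{y_2\}})$ (reaching the contradiction $1=-1$) whereas you use $C^k_\tau\chi_{\{y_1\}}$ alone (reaching $1=0$); your explicit justification of why the a.e.\ identity may be read at the individual points $u_1,u_2$ via positivity of singletons is in fact more careful than the paper's treatment.
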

\begin{proof}
Suppose that the map $\tau$ is not one-one. Then there exist
$x_1\neq x_2 \in R(\tau^k)$ such that $\tau(x_1)=\tau(x_2)$. Take
$\Omega_1= \{x\in \Omega: \tau^k(x)=x_1\} $ and $\Omega_2= \{x\in
\Omega: \tau^k(x)=x_2\}$. Note that $\Omega_1\cap \Omega_2=
\emptyset$ and $\tau^{k+1}(\Omega_1)=\tau^{k+1}(\Omega_2)$. Now
consider the function $f=\chi_{\Omega_1} - \chi_{\Omega_2}$. As
$0<\mu(\Omega_1)<\infty $ and $0<\mu(\Omega_2)<\infty$, hence
$\chi_{\Omega_1}$ and $\chi_{\Omega_2}$ are belongs to Orlicz space
$L^\phi \Omega)$ and hence $f$ also in $L^\phi(\Omega)$. Take $g =
\chi_{\{x_1\}} - \chi_{\{x_2\}}$. Then $g$ also in $L^\phi(\Omega)$ and
$C^k_\tau(g)= f$. This implies that $f\in R(C^k_\tau)$. Now claim is
that $f\notin R(C^{k+1}_\tau)$. If so then $f=C^{k+1}_\tau(g)$ for
some $g\in L^\phi(\Omega)$. Then
\begin{align*}
1= f(\Omega_1) & = C^{k+1}_\tau g(\Omega_1)\\
& = g(\tau^{k+1}(\Omega_1))\\
& = g(\tau^{k+1}(\Omega_2))\\
& = f(\Omega_2)\\
& = -1,
\end{align*}
which is a contradiction. Hence we have $f\in R(C^k_\tau)$ but
$f\notin R(C^{k+1}_\tau)$. Since $k$ is arbitrary, hence descent of
$C_\tau$ is infinite.
\end{proof}
\begin{corollary}
Suppose that the measure $\mu$ is as above. Then descent of the
composition operator is $N$ if and only if there exists a natural
number $N$ (smallest) such that the map $\tau: R(\tau^N)\rightarrow
R(\tau^N)$ is one-one.
\end{corollary}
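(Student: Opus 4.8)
The plan is to reduce the entire corollary to a single level-by-level equivalence and then simply read off the descent as the first level at which it holds. Concretely, I would prove that for every natural number $k$,
$$ R(C^k_\tau) = R(C^{k+1}_\tau) \iff \tau : R(\tau^k) \to R(\tau^k) \text{ is one-one.} $$
Granting this, the corollary is immediate: by definition $\delta(C_\tau)$ is the smallest $k$ for which the left-hand side holds, hence it equals the smallest $k$ for which the right-hand side holds, which is exactly the number $N$ in the statement. So the whole task is to supply the two implications of the displayed equivalence, and both are already essentially contained in the two preceding theorems.

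For the direction $(\Leftarrow)$ I would invoke the theorem asserting that one-one-ness of $\tau$ on $R(\tau^N)$ forces $\delta(C_\tau)\le N$, applied with $N$ replaced by $k$. Its proof in fact does more than bound the descent: it constructs, for each $f\in R(C^k_\tau)$, an explicit $h\in L^\phi(\Omega)$ with $C^{k+1}_\tau h = f$, thereby giving $R(C^k_\tau)\subseteq R(C^{k+1}_\tau)$ and hence equality, since the reverse inclusion always holds. This direction uses no hypothesis on the measure.

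The direction $(\Rightarrow)$ carries the real content, and it is where the standing assumption that every singleton has positive measure is used. I would argue the contrapositive, exactly mirroring the proof of the infinite-descent theorem: assuming $\tau$ fails to be one-one on $R(\tau^k)$, choose $x_1\neq x_2$ in $R(\tau^k)$ with $\tau(x_1)=\tau(x_2)$, set $\Omega_i=\{x:\tau^k(x)=x_i\}$, and consider $f=\chi_{\Omega_1}-\chi_{\Omega_2}$. Positivity of the singleton masses ensures $\chi_{\{x_1\}},\chi_{\{x_2\}}\in L^\phi(\Omega)$, so $g=\chi_{\{x_1\}}-\chi_{\{x_2\}}$ lies in $L^\phi(\Omega)$ and witnesses $f\in R(C^k_\tau)$; the evaluation argument giving $f(\Omega_1)=1$ against $f(\Omega_2)=-1$ then forces $f\notin R(C^{k+1}_\tau)$, so $R(C^k_\tau)\neq R(C^{k+1}_\tau)$. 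This completes the equivalence.

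I expect the only genuine obstacle to be bookkeeping rather than any new idea. In particular I would record the consistency check that the predicate ``$\tau$ is one-one on $R(\tau^k)$'' is upward closed in $k$: since $R(\tau^{k+1})\subseteq R(\tau^k)$ and $\tau\bigl(R(\tau^{k+1})\bigr)\subseteq R(\tau^{k+1})$, a restriction of an injection to the smaller set is again an injection, so once this property holds it persists. This matches the fact (from the stabilization theorem for ranges) that the chain $R(C^k_\tau)$ freezes once two consecutive terms agree, which is what makes ``smallest such $N$'' well posed and guarantees the two notions of ``smallest'' coincide. Beyond this, no estimate is needed that is not already present in the two theorems being cited.
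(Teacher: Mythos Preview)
Your proposal is correct and is exactly the argument the paper intends: the corollary is stated in the paper without proof, as an immediate consequence of the two preceding theorems, and your level-by-level equivalence together with the monotonicity check is precisely how one extracts it from those results. There is nothing to add beyond what you have outlined.
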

\section*{Acknowledgement}
 One of the authors (R. Kr. Giri) thanks the financial assistantship received from
 the Ministry of Human Resource Development
 (M.H.R.D.), Govt. of India.
\bibliographystyle{amsplain}

\end{document}